\numberwithin{equation}{section}
\newcommand{\R}{\mathbb{R}}
\newcommand{\K}{\mathcal{K}}
\newcommand{\eps}{\varepsilon}
\DeclareMathOperator{\diam}{diam}
\DeclareMathOperator{\proj}{proj}
\DeclareMathOperator{\dist}{dist}
\DeclareMathOperator{\hull}{hull}
\theoremstyle{plain}
	\newtheorem{theorem}{Theorem}[section]
	\newtheorem{lemma}[theorem]{Lemma}
	\newtheorem{proposition}[theorem]{Proposition}
	\newtheorem{corollary}[theorem]{Corollary}
\theoremstyle{definition}
	\newtheorem{remark}[theorem]{Remark}
\title[Cheeger constant and minimal width]{A reverse isoperimetric inequality for the Cheeger constant under width constraint}
\author[I.~Ftouhi]{Ilias Ftouhi}
\address[I.~Ftouhi]{
{N\^imes Universit\'e, Laboratoire MIPA, Site des Carmes 7, Place Gabriel P\'eri, 30000 N\^imes, France}}
\email{ilias.ftouhi@unimes.fr}
\author[I.~Lucardesi]{Ilaria Lucardesi}
\address[I.~Lucardesi]{Dipartimento di Matematica, Università di Pisa, Largo Bruno Pontecorvo 5, 56127 Pisa (PI), Italy}
\email{ilaria.lucardesi@unipi.it}
\author[G.~Saracco]{Giorgio Saracco}
\address[G.~Saracco]{
Dipartimento di Matematica e Informatica, Università di Ferrara, via Machiavelli 30, 44121 Ferrara (FE), Italy
}
\email{giorgio.saracco@unife.it}
\keywords{Cheeger constant, reverse inequality, minimal width}
\subjclass[2020]{Primary 52A10. Secondary 49Q10, 52A38}
\begin{document}


\begin{abstract}
Henrot and Lucardesi, in \emph{Commun.\ Contemp.\ Math.} (2024), conjectured that among planar convex sets with prescribed minimal width, the equilateral triangle uniquely maximizes the Cheeger constant. In this short note, we confirm this conjecture. Moreover, we establish a local stability result for the inequality in terms of the Hausdorff distance.
\end{abstract}


 \hspace{-3cm}
 {
 \begin{minipage}[t]{0.7\linewidth}
 \begin{scriptsize}
 \vspace{-2cm}
 This is a pre-print of an article published in \emph{Commun.\ Contemp.\ Math}. The final authenticated version is available online at: \href{https://doi.org/10.1142/S0219199725501032}{https://doi.org/10.1142/S0219199725501032}
 \end{scriptsize}
\end{minipage} 
}

\maketitle


\section{Introduction}

A \emph{convex body} $K$ is a compact convex subset of $\mathbb{R}^d$ with non-empty interior. The \emph{Cheeger constant} of $K$, first introduced for general bounded sets in $\mathbb{R}^d$ in~\cite{Maz62b,Maz62a}---although it owes its name to Cheeger's paper~\cite{Che70}---, is defined as
\begin{equation}
\label{h}
h(K)
=
\inf\left\{\,\frac{P(E)}{|E|} \,:\, E \, \text{measurable, } \, E\subseteq K, \, |E|>0\, \right\},
\end{equation}
where $P(E)$ is the distributional perimeter of $E$, also known as variational, Caccioppoli, or De Giorgi perimeter, and $|E|$ is the $d$-dimensional Lebesgue measure of $E$. The Cheeger constant is sometimes also referred to as the \emph{isoperimetric constant of $K$}, as it provides the best constant $c$ in the (non-scale invariant) isoperimetric inequality $P(E)\ge c|E|$ for all subsets $E$ of $K$, accounting for the geometric features of the ambient set $K$. We refer the interested reader to the surveys~\cite{FPSS24,Leo15,Par11}.

Any set realizing the infimum in~\eqref{h} is called a \emph{Cheeger set} of $K$. Under our standing assumptions on $K$, there exists a unique Cheeger set~\cite[Thm.~1]{AC09}.  From the definition, it follows that the Cheeger constant is positively $(-1)$-homogeneous, \emph{i.e.},
\[
h(tK) = t^{-1} h(K),\qquad \text{for all } t>0.
\]
Moreover, it is monotone decreasing with respect to set inclusion, \emph{i.e.},
\[
h(K') \geq h(K),\qquad \text{if } K' \subset K.
\]
Therefore, to meaningfully maximize $h(K)$ over a family of convex bodies, one must prevent admissible sets from shrinking or collapsing. One approach is to enforce curvature constraints, as in~\cite{DT24, FSf}. In this paper, we focus instead on the approach of Henrot and Lucardesi~\cite{HL24}, who considered a  \emph{constant width} constraint. For a comprehensive treatment of such sets, we refer to~\cite{MMO19book}. 

In dimension $2$, the \emph{directional width} of a convex body $K$ can be understood as follows: given a direction $\nu_\theta \in \mathbb{S}^1$, \emph{i.e.}, the pair $(\cos\theta, \sin\theta)$, define
\[
w_{\nu_\theta}(K) = \mathcal{H}^1\left(\proj_\theta(K)\right),
\]
where $\proj_\theta(K)$ is the orthogonal projection of $K$ onto the line spanned by $\nu_\theta$, and $\mathcal{H}^1$ denotes the one-dimensional Hausdorff measure. Roughly speaking, this measures the length of the projection of $K$ onto the given line. The set $K$ is said to have constant width if $w_{\nu_\theta}(K)$ is constant, \emph{i.e.}, independent of $\nu_\theta$.

Henrot and Lucardesi~\cite{HL24} showed that, among planar convex bodies with prescribed constant width, the Cheeger constant $h(\,\cdot\,)$ is maximized by the Reuleaux triangle. Later, an alternative proof of the same result has been proposed by Bogosel~\cite{Bog23u}.

One may relax the constant width constraint by requiring instead a prescribed \emph{minimal width} constraint (also called \emph{thickness}), defined by
\[
w(K) = \min_{\nu_\theta\in\mathbb{S}^1}\, w_{\nu_\theta}(K).
\]
They conjectured that, under this constraint, the maximum of $h(\,\cdot\,)$ is attained by any equilateral triangle $T_e$ saturating the constraint. Since the minimal width functional is positively $1$-homogeneous, this conjecture is equivalent to stating that the scale invariant functional
\[
K\mapsto w(K)h(K)
\]
is maximized among planar convex bodies by any equilateral triangle $T_e$, \emph{i.e.},
\begin{equation}
\label{eq:main_ineq_intro}
w(K)h(K) \le w(T_e)h(T_e).
\end{equation}
We give a positive answer to this conjecture in \cref{thm:main}, also proving the rigidity of the inequality, \emph{i.e.}, the uniqueness of $T_e$ as the maximizer. 
This inequality has been independently proved in the recent\footnote{It appeared online two months after our preprint was posted online.} preprint~\cite{Bog25u}.
Furthermore, we prove a local quantitative stability result for~\eqref{eq:main_ineq_intro}, \emph{i.e.}, we show that if a planar convex body $K$ is $\varepsilon$-close to attaining the maximum, \emph{i.e.}, if
\[
w(T_e)h(T_e)-w(K)h(K)=\varepsilon,
\]
for $\varepsilon < \eta < 3^{\sfrac14}\, \pi^{\sfrac12}$, then $K$ is $\varepsilon$-Hausdorff close to an equilateral triangle $T_{e}$ with $w(T_e)=w(K)$, \emph{i.e.}, 
\[
d_H(K,T_{e}) \le Cw(K)\varepsilon,
\]
where $C=C(\eta)$ is a positive constant and the linear dependence on $\varepsilon$ is sharp, see \cref{thm:stability} and \cref{prop:sharpness}, and refer to \cref{sec:notation} for the definition of the \emph{Hausdorff distance} $d_H(\,\cdot,\,\cdot\,)$ between two convex bodies. Stability results for the maximization of the Cheeger constant under suitable constraints have been proved in~\cite{DT24,FSf}, whereas for its minimization (without any further constraint) in~\cite{FMP09,JS21}.

Before proving the two main theorems, we ensure that a maximizer indeed exists. We show this in arbitrary dimension, \emph{i.e.}, in the class of $d$-dimensional convex bodies with a minimal width constraint, see \cref{lem:existence}. For completeness, we also show that the corresponding minimization problem is ill-posed, see \cref{lem:non-ex_minimizer}.

The paper is organized as follows. In \cref{sec:notation}, we fix the notation and give the relevant definitions. In \cref{sec:main}, we present our main results, comment them, and outline the main ideas behind their proofs. In \cref{sec:proofs}, we prove the existence of maximizers and the nonexistence of minimizers in arbitrary dimension, and we give the proofs of our two main results.


\section{Notations and definitions}
\label{sec:notation}

We lay out here the notation used throughout the paper and provide the relevant definitions. The precise definitions of the Cheeger constant and Cheeger set have already been given in the introduction, so we shall not repeat them.

A \emph{convex body} in $\mathbb R^d$ is a compact convex set with non-empty interior. We denote by $\mathcal{K}^d$ the family of convex bodies in $\mathbb R^d$. Throughout, we denote by $B_1$ the ball centered at the origin with radius $1$; by $T_e$ a generic equilateral triangle. Additional requirements on $T_e$, if any, will be explicitly stated.

Given a non-empty subset $L\subset \mathbb{R}^d$, we let $\dist(\,\cdot\,, L):\mathbb{R}^d \to [0,+\infty)$ be the \emph{distance function from $L$}, namely
\[
\dist(x, L)=\inf_{y\in L}\,\|x-y\|.
\]
If $L$ is compact, the above is a minimum.

Given $K\in\mathcal{K}^d$, we denote by $r(K)$ the \emph{inradius} of $K$, \emph{i.e.},
\[
r(K)=\max_{x\in K}\, \dist(x,\partial K) > 0.
\]
For $t\geq 0$, we denote the \emph{inner parallel set of $K$ at distance $t$} by
\[
K_{-t} =\{\,x\in K \, : \, \dist(x,\partial K)\ge t\,\},
\]
and we stress that $K_{-t}\neq \emptyset$ if and only if $t\in[0,r(K)]$.

To properly define the minimal width, we introduce the \emph{support function} of a convex body $K\in \mathcal K^d$, namely $h_K:\mathbb S^{d-1}\to \mathbb R$, defined as
\[
h_K(\nu)=\max_{x\in K}\, \{\,x\cdot \nu\,\}.
\]
We remark that $h_K$ is sometimes extended to the whole of $\mathbb{R}^d$ by positive $1$-homogeneity. Geometrically, $h_K(\nu)$ represents the distance from the origin to the supporting hyperplane of $K$ with outer unit normal $\nu$. Given a unit vector $\nu$, the distance between the two supporting hyperplanes to $K$ orthogonal to $\nu$---with outer normals $\nu$ and $-\nu$, respectively---is the \emph{directional width} $w_{\nu}(K)$, \emph{i.e.},
\begin{equation}
\label{eq:directional_w}
w_\nu(K)=h_K(\nu)+h_K(-\nu).
\end{equation}
The minimum of these widths is called the \emph{minimal width} or \emph{thickness}, \emph{i.e.},
\[
w(K)=\min_{\nu\in \mathbb S^{d-1}}w_\nu(K).
\]

As mentioned in the Introduction, in dimension $2$ we may represent unit vectors by an angle $\theta\in [0,2\pi]$, namely $\nu_\theta=(\cos \theta, \sin \theta)\in \mathbb S^1$. The directional width can then be expressed as
\[
w_{\nu_\theta}(K)=\mathcal{H}^1(\proj_{\theta}(K)),
\]
where $\proj_\theta$ denotes the orthogonal projection onto the line spanned by $\nu_\theta$, and $\mathcal{H}^1$ is the one-dimensional Hausdorff measure. Accordingly, the minimal width of $K$ becomes
\[
w(K)=\min_{\nu_\theta \in \mathbb{S}^1}\, w_{\nu_\theta}(K).
\]

Given two convex bodies $K$ and $L$, their \emph{Minkowski sum} is defined as
\[
K\oplus L=\{\,a+b\,:\, a\in K,\, b\in L\,\}.
\]
The \emph{Hausdorff distance} between two convex bodies $K,L\in \mathcal K^d$ is defined as
\[
d_H(K,L)=\max\{\, \max_{x\in K}\, \dist(x,L),\, \max_{y\in L}\, \dist(y,K) \, \}.
\]
An equivalent characterization, which highlights the geometric meaning of $d_H(\,\cdot\,,\,\cdot\,)$, is given by
\[
d_H(K,L)=\min\{\,r\geq 0\,:\, K\subset L\oplus rB_1,\, L\subset K\oplus rB_1\,\}.
\]
The space $\mathcal K^d$, endowed with this distance, is a locally compact metric space.


\section{Statement of main results}
\label{sec:main}

Our first main theorem confirms the conjecture raised by Henrot and Lucardesi~\cite{HL24}, namely that among planar convex bodies of prescribed minimal width, the equilateral triangle maximizes the Cheeger constant. Moreover, it is the unique maximizer.

\begin{theorem}[Reverse inequality \& rigidity]
\label{thm:main}
Let $K \in \mathcal{K}^2$ be a planar convex body. Then,
\[
w(K)h(K) \leq w(T_e)h(T_e) = 3 + \sqrt{\pi\sqrt{3}}.
\]
Furthermore, equality holds if and only if $K$ is an equilateral triangle $T_e$. 
\end{theorem}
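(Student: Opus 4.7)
The plan is to combine the Kawohl--Lachand-Robert characterisation of the Cheeger constant with Pál's isoperimetric inequality and a Steinhagen-type bound controlling how much the minimum width can drop under inner parallelisation.

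First I would reduce the problem using the Kawohl--Lachand-Robert formula: for every $K\in\mathcal{K}^2$, one has $h(K)=1/r^\star$, where $r^\star\in(0,r(K))$ is the unique value with $|K_{-r^\star}|=\pi(r^\star)^2$, and the unique Cheeger set of $K$ is $K_{-r^\star}\oplus B_{r^\star}$. Therefore $w(K)h(K)=w(K)/r^\star$, so the conjectured inequality becomes $w(K)\leq r^\star\bigl(3+\sqrt{\pi\sqrt{3}}\bigr)$. Next, since $K_{-r^\star}$ is itself a planar convex body, Pál's inequality $|L|\geq w(L)^2/\sqrt{3}$ (with equality iff $L$ is an equilateral triangle) applied to $L=K_{-r^\star}$, together with $|K_{-r^\star}|=\pi(r^\star)^2$, yields the first estimate $w(K_{-r^\star})\leq r^\star\sqrt{\pi\sqrt{3}}$.

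The heart of the argument is then to establish the Steinhagen-type inner-parallel width inequality
\[
w(K)\leq w(K_{-t})+3t\qquad\text{for every } t\in[0,r(K)],
\]
with equality if and only if $K$ is an equilateral triangle. Taking $t=r^\star$ and chaining with the previous estimate gives $w(K)\leq 3r^\star+r^\star\sqrt{\pi\sqrt{3}}$, which is the sought bound. Rigidity follows at once, since both Pál's inequality and the inner-parallel width bound are saturated only by equilateral triangles.

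The main obstacle is the inner-parallel width inequality. The equilateral triangle already saturates it: in its minimum-width direction the two supporting lines meet $K$ along an edge $e$ (whose contribution to the shift of the support function is $t$) and at the opposite vertex $V$ of interior angle $\pi/3$ (with shift $t/\sin(\pi/6)=2t$), adding to exactly $3t$. For a general conv
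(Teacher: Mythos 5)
Your skeleton is sound and, once completed, would give a correct proof along a route that genuinely differs from both arguments in the paper: you feed P\'al's inequality and the width--inner-parallel bound into the explicit Kawohl--Lachand-Robert formula $h(K)=1/r^\star$, $|K_{-r^\star}|=\pi(r^\star)^2$, evaluated at the inner Cheeger radius. (The paper's first proof applies P\'al and the bound $w(K)\le 3r(K)$ to $K$ itself and combines them with Ftouhi's inequality $h(K)\le 1/r(K)+\sqrt{\pi/|K|}$; its second proof uses the same width lemma as you but concludes via the area comparison $|K_{-t}|\ge |(T_e)_{-t}|$ for all $t$ and a monotonicity consequence of Kawohl--Lachand-Robert, never invoking the explicit formula.) However, as submitted the proposal has two genuine gaps. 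The first is the missing proof of the inequality $w(K)\le w(K_{-t})+3t$: your text breaks off after checking that the equilateral triangle saturates it. You correctly identify this as the heart of the matter --- it is exactly \cref{lem:width}, the new technical content of the paper --- but it is not a one-line computation. The paper proves it for convex polygons by classifying, for each side $\ell_i$ realizing the width, the interior angle $\alpha_{\sigma(i)}$ at the farthest vertex: when $\alpha_{\sigma(i)}\ge \pi/3$ the vertex recedes by at most $t/\sin(\alpha_{\sigma(i)}/2)\le 2t$ and the side by $t$; when $\alpha_{\sigma(i)}<\pi/3$ that index can be discarded because the circumscribed triangle generated by $\ell_i$, $\ell_{\sigma(i)-1}$, $\ell_{\sigma(i)}$ already has width strictly below $\dist(v_{\sigma(i)}^t,\ell_i^t)$; one then iterates over the intervals on which $K_{-t}$ keeps the same number of sides and passes to general convex bodies by density. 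Without some such argument your proof is a reduction, not a proof.

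The second gap is in the rigidity. You assert that the inner-parallel width bound is saturated \emph{only} by equilateral triangles; this ``only if'' is not established (and the paper's lemma claims only that equilateral triangles \emph{attain} equality, not that they are the only sets doing so at a given $t$), so equality in your chain does not immediately force $K$ to be a triangle. Fortunately the repair stays entirely inside your scheme: equality forces, via P\'al, $K_{-r^\star}$ to be an equilateral triangle, hence $w(K_{-r^\star})=3r(K_{-r^\star})=3\bigl(r(K)-r^\star\bigr)$, and then the saturated width bound gives $w(K)=w(K_{-r^\star})+3r^\star=3r(K)$; the equality case of the inequality $w(K)\le 3r(K)$ (which holds only for equilateral triangles) now concludes. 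With the lemma proved and this adjustment, your argument is complete.
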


\begin{remark}
As mentioned, \cref{thm:main} can also be reformulated as
\[
h(K)\le h(T_e), \qquad \forall K\in\mathcal{K}^2\,:\, w(K)=w_0,
\]
where $T_e$ denotes an equilateral triangle with the same minimal width as $K$, namely $w_0$. The result remains valid even if one relaxes the constraint to $w(K) \geq w_0$. Indeed, by the monotonicity of the Cheeger constant with respect to set inclusion and the $1$-homogeneity of the minimal width, it follows that
\[
\max\{\, h(K)\,:\, w(K)=w_0 \,\} = \max\{\, h(K)\,:\, w(K)\geq w_0  \,\}.
\]
This relaxation leads naturally to the consideration of the class
\[
\mathcal R=\{K\,\in \mathcal K^d\ :\ \forall K'\subset K\,,\ K'\in \mathcal K^d\ \text{ there holds } w(K')<w(K)\,\},
\]
whose elements are called \emph{reduced bodies}. This class is well studied in the literature (see~\cite{LM11} and references therein) and strictly contains the class of constant width bodies.
\end{remark}

We provide two different proofs of \cref{thm:main}. The first is a short argument based on an inequality proved by Ftouhi in~\cite[Thm.~1]{Fto21}, together with some classical inequalities for convex sets~\cite{SA00}. The second proof combines~\cite[Thm.~1]{KLR05}, which characterizes the Cheeger set of a convex body, with \cref{lem:width}, which provides a sharp lower bound on the width of the inner parallel sets $K_{-t}$ in terms of the width of $K$. As far as we are aware, this latter result is not present in the literature and is of independent interest. We mention that a third different proof is given in~\cite{Bog25u}.

Before stating the local stability result, we define the \emph{width--Cheeger deficit}. Given a convex set $K \in \mathcal{K}^2$, it is defined as
\[
\delta_{wh}(K) = w(T_e)h(T_e) - w(K)h(K) \ge 0.
\]
We also define the \emph{Hausdorff--width asymmetry} of $K$ as
\begin{equation}
\label{eq:asymmetry}
\alpha_E(K) = \inf_{T_e}\left\{\,\frac{d_H(K, T_e)}{w(K)}\,:\, w(T_e)=w(K)\,\right\},
\end{equation}
first introduced in~\cite[Eq.~(1.3)]{LZ25}. Both $\delta_{wh}(\,\cdot\,)$ and $\alpha_E(\,\cdot\,)$ are scale invariant.

\begin{remark}
The infimum in the definition of $\alpha_E$ is actually a minimum. Up to a translation, we can assume $K\subset \diam(K)B_1$. Let $(T^n_e)_{n\in\mathbb{N}}$ be a minimizing sequence satisfying $w(T^n_e) = w(K)$ and
\[
d_H(K, T^n_e) \le \left(\alpha_E(K)+\frac 1n\right)w(K). 
\]
Since $K\subset \diam(K)B_1$, we obtain that
\[
T^n_e \subset \big(\diam(K)+(\alpha_E(K)+2)w(K)\big)B_1.
\]
Hence, by the Blaschke Selection Theorem, up to a subsequence, the triangles $T^n_e$ converge in the Hausdorff sense to a limit set $\overline{T}_e$, which is an equilateral triangle.
Moreover, the constraint $w(T^n_e) = w(K)$ is stable under the Hausdorff convergence (see \cref{lem-wcont}), so $\overline{T}_e$ realizes the infimum in~\eqref{eq:asymmetry}.
\end{remark}

\begin{theorem}[Local stability]
\label{thm:stability}
Let $\eta \in (0, 3^{\sfrac14}\, \pi^{\sfrac12})$. There exists a positive constant $C = C(\eta) > 0$ such that, for all $K\in \mathcal{K}^2$, if $\delta_{wh}(K) \le \eta$, then
\begin{equation}
\label{eq:stability}
\alpha_E(K)\leq C \delta_{wh}(K).
\end{equation}
\end{theorem}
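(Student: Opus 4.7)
The plan is to argue by contradiction-compactness, reducing to a local analysis around the equilateral triangle, and then to establish a linear coercivity of $\delta_{wh}$ against $\alpha_E$ near $T_e$.

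By scale invariance of both $\alpha_E$ and $\delta_{wh}$, I first normalize $w(K)=1$. Supposing the desired inequality fails for every $C$, I extract a sequence $\{K_n\}\subset\mathcal K^2$ with $w(K_n)=1$, $\delta_n:=\delta_{wh}(K_n)\in(0,\eta]$, and $\alpha_n:=\alpha_E(K_n)>n\,\delta_n$. The assumption $\eta<3^{\sfrac14}\pi^{\sfrac12}=\sqrt{\pi\sqrt3}$ forces $h(K_n)>3$, which in turn yields a uniform bound on $\diam(K_n)$: the Kawohl--Lachand-Robert identity $|K_{-1/h(K)}|=\pi/h(K)^2$, together with $K=K_{-1/h}\oplus (1/h)B_1$ (whence $\diam(K_{-1/h})=\diam(K)-2/h$), \cref{lem:width} (whence $w(K_{-1/h})\ge w(K)-2/h$), and the elementary bound $|C|\ge \tfrac12 w(C)\diam(C)$ for planar convex $C$, gives $\diam(K_n)\le D(\eta)$. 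After translating, $\{K_n\}$ lies in a fixed ball, so Blaschke selection produces (up to subsequence) a limit $K_n\to K^*$ in Hausdorff distance; continuity of $w$ (\cref{lem-wcont}) and of the Cheeger constant on convex bodies yields $w(K^*)=1$ and $\delta_{wh}(K^*)=\delta^*:=\lim_n\delta_n\in[0,\eta]$.

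Next I dichotomize on $\delta^*$. If $\delta^*>0$, then $\alpha_E(K^*)<+\infty$, and by the continuity argument for $\alpha_E$ described after~\eqref{eq:asymmetry}, $\alpha_n\to\alpha_E(K^*)$, contradicting $\alpha_n>n\delta_n$ with $\delta_n\to\delta^*$. Hence $\delta^*=0$, and the rigidity in \cref{thm:main} forces $K^*=T_e$, an equilateral triangle of width $1$. In particular $\alpha_n\to 0$, so the problem reduces to comparing $\delta_{wh}$ with $\alpha_E$ in arbitrarily small Hausdorff neighborhoods of $T_e$.

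The hard part is the remaining ingredient, a \emph{local linear coercivity}: there must exist $r_0,c>0$ such that, whenever $w(K)=1$ and $d_H(K,T_e)\le r_0$,
\[
\delta_{wh}(K)\ge c\,\alpha_E(K).
\]
Applied to the tail of $\{K_n\}$, this gives $\alpha_n\le c^{-1}\delta_n$, contradicting $\alpha_n>n\delta_n$ for $n$ large and closing the argument. The linear rate is compatible with $T_e$ being a non-smooth maximizer: its support function $h_{T_e}$ is piecewise-affine on $\mathbb S^1$, so $K\mapsto w(K)h(K)$ admits only one-sided directional derivatives at $T_e$, and generic admissible perturbations decrease $w(K)h(K)$ linearly rather than quadratically, matching the linear size of $\alpha_E$ and the sharpness example in \cref{prop:sharpness}. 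I would implement this by parameterizing admissible perturbations through the support function $h_K=h_{T_e}+\varphi$ with $\varphi$ small in $L^\infty(\mathbb S^1)$ and subject to $w(K)=1$, using the KLR identity to view $h(K)$ as implicitly defined by $|K_{-1/h(K)}|=\pi/h(K)^2$, and expanding both sides to first order in $\varphi$ while leveraging \cref{lem:width} to propagate width control to the relevant inner parallels. Verifying that the leading coefficient is strictly positive, uniformly over all admissible directions of Hausdorff displacement, is the technical heart of the proof.
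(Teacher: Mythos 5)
Your argument has a genuine gap: everything before the last paragraph is only a reduction, and the statement you reduce to --- the ``local linear coercivity'' $\delta_{wh}(K)\ge c\,\alpha_E(K)$ for $K$ in a Hausdorff neighborhood of $T_e$ with $w(K)=1$ --- is precisely the content of \cref{thm:stability} restricted to a neighborhood of the maximizer, and you do not prove it. You correctly identify it as ``the technical heart,'' but the implementation you sketch (parameterizing by support-function perturbations $h_K=h_{T_e}+\varphi$, treating $h(K)$ as implicitly defined by the Kawohl--Lachand-Robert identity, expanding to first order, and checking strict positivity of the leading coefficient \emph{uniformly over all admissible directions}) is a full research program, not a proof step; the uniformity over the infinite-dimensional set of perturbation directions is exactly where such one-sided first-order arguments typically break down, and nothing in the paper's toolbox (\cref{lem:width}, the KLR identity) hands you that uniformity. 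In addition, the compactness reduction itself contains a flawed substep: you use $K=K_{-1/h(K)}\oplus (1/h(K))B_1$ to get $\diam(K_{-1/h})=\diam(K)-2/h$, but for a general convex body one only has $K_{-t}\oplus tB_1\subseteq K$ (strict already for a triangle), so that identity and the resulting diameter bound do not hold as stated; and the continuity of $\alpha_E$ along Hausdorff-convergent sequences is asserted but not justified (the remark after~\eqref{eq:asymmetry} only shows the infimum is attained, not continuity of $K\mapsto\alpha_E(K)$).

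For comparison, the paper avoids compactness entirely and obtains the linear rate in three lines: Ftouhi's inequality~\eqref{ineq:ftouhi} gives $w(K)h(K)\le w(K)/r(K)+\sqrt{\pi}\,w(K)/\sqrt{|K|}$; the bound $w(K)\le 3r(K)$ from~\eqref{ineq:scott} controls the first term by $3$; and the \emph{quantitative} P\'al inequality of~\cite[Thm.~1.2]{LZ25}, namely $|K|/w(K)^2\ge c_2\,\alpha_E(K)+1/\sqrt{3}$, turns the second term into an explicit decreasing function of $\alpha_E(K)$. Setting $\delta_{wh}(K)=\varepsilon\le\eta$ and rearranging yields $c_2\,\alpha_E(K)\le \pi/(\sqrt{\pi\sqrt3}-\varepsilon)^2-1/\sqrt3\le C(\eta)\,c_2\,\varepsilon$ with the explicit constant~\eqref{eq:value-c-eta}. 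In other words, the linear coercivity you are missing is exactly what the quantitative P\'al inequality supplies; without importing such a quantitative geometric input (or proving one), your scheme cannot close.
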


The proof of the local stability result relies once again on the inequality in~\cite[Thm.~1]{Fto21}, together with the quantitative version of P\'al's inequality proved in~\cite[Thm.~1.2]{LZ25}.

\begin{remark}
The dependence of the constant $C$ on $\eta$ in \cref{thm:stability} is essential, and one cannot expect a global quantitative stability result also covering sets with $\delta_{wh}(K) > 3^{\sfrac14}\, \pi^{\sfrac12}$. First, $C(\eta)$ blows up as $\eta \to 3^{\sfrac14}\, \pi^{\sfrac12}$; see~\eqref{eq:value-c-eta}. 

Second, consider the rectangles $R_L = [-L,L] \times [0,1]$. For $L \ge 1$, we have $w(R_L) = 1$. On the one hand, the deficit $\delta_{wh}(K)$ is trivially bounded above, independently of $K$, by $w(T_e) h(T_e)$. For completeness, we mention that $h(R_L) \searrow 2$ as $L \to \infty$; see, \emph{e.g.},~\cite[Thm.~2.1]{PS25} or our own \cref{lem:non-ex_minimizer}. Thus, $\delta_{wh}(R_L) \nearrow 1 + 3^{\sfrac14}\, \pi^{\sfrac12}$, which exceeds the threshold in \cref{thm:stability}. 

On the other hand, denote by $T_1$ the equilateral triangle whose base lies symmetrically on the $x$-axis and whose third vertex is $(0,1)$. Then
\[
\alpha_E(R_L) = \frac{d_H(R_L, T_1)}{w(R_L)} \sim L.
\]
Therefore, an inequality like~\eqref{eq:stability} cannot hold for all $K \in \mathcal{K}^2$ without imposing an upper bound on their Cheeger deficit.

For completeness, we mention that for any $\eta \le \bar\eta = (3^{\sfrac14}\, \pi^{\sfrac12})/2$, an admissible constant is
\[
C = \frac{8\sqrt[4]{3}}{75\sqrt{5\pi}},
\]
as follows from plugging $\bar \eta$ into~\eqref{eq:value-c-eta} and using~\cite[Rem.~5.3]{LZ25}.
\end{remark}

\begin{remark}
The linear dependence on $\delta_{wh}(\,\cdot\,)$ in \cref{thm:stability} is sharp, \emph{i.e.}, inequality~\eqref{eq:stability} cannot hold when replacing $\alpha_E(\,\cdot\,)$ with its $p$-th power, for any $p<1$, see \cref{prop:sharpness}.
\end{remark}

\begin{remark}
The local stability estimate with respect to $\alpha_E(\cdot)$ also implies local stability with respect to the \emph{Fraenkel asymmetry} $\mathcal{A}_E(\cdot)$ (see~\cite[Eq.~(1.5)]{LZ25} for its definition), which measures the $L^1$-distance between the characteristic function of $K$ from those of equilateral triangles. This follows from the inequality
\[
\alpha_E(K) \ge \frac{1}{3\sqrt{3}+2} \mathcal{A}_E(K),
\]
as proved in~\cite[Prop.~2.1]{LZ25}.
\end{remark}


\section{Proofs of statements}
\label{sec:proofs}

In \cref{sec:existence}, we prove the existence of maximizers and we also remark the ill-posedness of the minimization problem in \emph{all dimensions}. In \cref{sec:proof_main}, we give two different proofs of \cref{thm:main}. In \cref{sec:proof_stability}, we prove \cref{thm:stability}.

\subsection{Maximization and minimization in arbitrary dimensions}
\label{sec:existence}

In this section, we work in arbitrary dimension, so that we consider convex bodies $K\in\mathcal{K}^d$. We start by proving that the width constraint is stable with respect to the Hausdorff convergence. We stress that to state the following lemma, we need to consider the larger class of compact convex subsets of $\mathbb{R}^d$, including those with empty interior.

\begin{lemma}
\label{lem-wcont}
The functional $w(\,\cdot\,)$ is continuous with respect to the Hausdorff convergence, among compact convex subsets of $\mathbb R^d$.
\end{lemma}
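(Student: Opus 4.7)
The plan is to reduce the continuity of $w(\,\cdot\,)$ to the uniform continuity of the support function with respect to Hausdorff convergence. Indeed, recalling that $w_\nu(K)=h_K(\nu)+h_K(-\nu)$ and $w(K)=\min_{\nu\in\mathbb S^{d-1}}w_\nu(K)$, it suffices to show that the map $K\mapsto h_K$ is Lipschitz continuous from $(\mathcal K^d_{\text{cc}},d_H)$ to $(C(\mathbb S^{d-1}),\|\cdot\|_\infty)$, where $\mathcal K^d_{\text{cc}}$ denotes the class of compact convex subsets of $\mathbb R^d$. Taking the minimum in $\nu$ is then harmless, since $|\min f-\min g|\leq \|f-g\|_\infty$ for bounded functions on $\mathbb S^{d-1}$.

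The first step is thus to prove the pointwise estimate
\[
|h_{K_n}(\nu)-h_K(\nu)|\leq d_H(K_n,K),\qquad \forall \nu\in \mathbb S^{d-1}.
\]
To see this, fix $\nu$ and let $x^*\in K_n$ realize the maximum defining $h_{K_n}(\nu)$; such a maximum exists by compactness. By definition of Hausdorff distance, there is some $y\in K$ with $\|x^*-y\|\leq d_H(K_n,K)$, and then
\[
h_{K_n}(\nu)=x^*\cdot \nu\leq y\cdot \nu+d_H(K_n,K)\leq h_K(\nu)+d_H(K_n,K).
\]
Exchanging the roles of $K_n$ and $K$ yields the reverse bound, hence the claimed Lipschitz estimate holds uniformly in $\nu$.

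Second, applying the above to both $\nu$ and $-\nu$ and summing, one obtains
\[
\|w_{(\cdot)}(K_n)-w_{(\cdot)}(K)\|_{L^\infty(\mathbb S^{d-1})}\leq 2\,d_H(K_n,K).
\]
Since $\nu\mapsto w_\nu(K)$ is continuous on the compact set $\mathbb S^{d-1}$, the minimum is attained; and the uniform convergence of directional widths forces $w(K_n)\to w(K)$ as $n\to\infty$, with the quantitative bound $|w(K_n)-w(K)|\leq 2\,d_H(K_n,K)$. I do not foresee any real obstacle; the only point worth emphasizing is that the argument does \emph{not} require the interior of the limit set to be non-empty, which is exactly why the statement is phrased for the enlarged class of compact convex sets: a Hausdorff limit of convex bodies may well degenerate to a lower-dimensional convex set, yet the minimal width (possibly zero) still passes to the limit by the above inequality.
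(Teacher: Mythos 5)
Your proof is correct, and it reaches the conclusion by a somewhat more elementary and more quantitative route than the paper. Both arguments rest on the same underlying fact, namely that Hausdorff convergence controls the support functions uniformly on $\mathbb S^{d-1}$ (you prove the estimate $\|h_{K_n}-h_K\|_{L^\infty(\mathbb S^{d-1})}\le d_H(K_n,K)$ from scratch, correctly, whereas the paper cites it from Schneider's book as an equivalence of convergences). The difference lies in how one passes from the directional widths to their minimum: you invoke the one-line inequality $|\min f-\min g|\le\|f-g\|_\infty$, which immediately yields the Lipschitz bound $|w(K_n)-w(K)|\le 2\,d_H(K_n,K)$; the paper instead selects optimal directions $\nu_n$ with $w(K_n)=w_{\nu_n}(K_n)$, extracts a convergent subsequence by compactness of the sphere, and uses the Lipschitz continuity of $\nu\mapsto w_\nu(E)$ (with constant $\diam(E)$) to prove the two semicontinuity inequalities separately. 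Your approach buys a clean quantitative modulus of continuity with no subsequence extraction and no dependence on diameters; the paper's approach, while longer, additionally identifies a limiting optimal direction $\nu^*$ for $K$, which can be useful information in its own right but is not needed for the statement. Your closing remark that nothing requires the limit to have non-empty interior is also accurate and matches the reason the lemma is stated for general compact convex sets.
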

\begin{proof}
Let $(K_n)_{n\in \mathbb N}\subset \mathbb R^d$ be a sequence of compact convex sets converging, with respect to the Hausdorff metric, to some compact convex set $K\subset \mathbb R^d$. It is well-known (see, \emph{e.g.}, \cite[Lem.~1.8.14]{Sch14book}) that the Hausdorff convergence is equivalent to uniform convergence of support functions, \emph{i.e.},
\begin{equation}
\label{eq:uniform}
\|h_{K_n}-h_K\|_{L^\infty(\mathbb S^{d-1})} \to 0\,, \qquad \text{as }n\to \infty.
\end{equation}
Moreover (see, \emph{e.g.},~\cite[Cor.~1.8.13]{Sch14book}), for any fixed convex compact set $E\subset \mathbb R^d$, the map $\nu\mapsto w_\nu(E)$ is Lipschitz continuous with Lipschitz constant $\diam(E)$, \emph{i.e.},
\begin{equation}
\label{eq:Lipschitz}
|w_{\nu_1}(E)-w_{\nu_2}(E)|\leq \diam(E)\|\nu_1-\nu_2\|,\quad \forall\, \nu_1, \nu_2\in \mathbb S^{d-1}.
\end{equation}
Let $(\nu_n)_{n\in \mathbb N}\subset \mathbb S^{d-1}$ be a sequence of directions such that
\begin{equation}
\label{eq:selection_nu_n}
w(K_n)=w_{\nu_n}(K_n).
\end{equation}
By compactness of the sphere, there exists a subsequence (not relabeled) and a unit vector $\nu^*$ such that $\nu_n\to \nu^*$ in $\mathbb R^d$. Using~\eqref{eq:selection_nu_n}, the Lipschitz continuity~\eqref{eq:Lipschitz}, and the definition of directional width~\eqref{eq:directional_w}, we estimate
\begin{align*}
|w(K_n)-w_{\nu^*}(K)| 
& 
= 
|w_{\nu_n}(K_n) - w_{\nu^*}(K)|
\\ 
&
\leq
|w_{\nu_n}(K_n) - w_{\nu^*}(K_n)| + |w_{\nu^*}(K_n) - w_{\nu^*}(K)|
\\ 
& 
\leq 
\diam(K_n)\|\nu_n-\nu^*\| + 2\|h_{K_n}-h_{K}\|_{L^\infty(\mathbb S^{d-1})}.
\end{align*}
As $(K_n)_{n\in\mathbb{N}}$ Hausdorff converges, $(\diam(K_n))_{n\in\mathbb{N}}$ is uniformly bounded, and we also have the uniform convergence of the support functions~\eqref{eq:uniform}. Therefore, $w(K_n)\to w_{\nu^*}(K)$. In particular, it follows
\begin{equation}
\label{lower}
\lim_{n\to +\infty} w(K_n) 
\geq 
w(K).
\end{equation}
On the other hand, let $\overline{\nu}\in \mathbb S^{d-1}$ be such that $w(K)=w_{\overline{\nu}}(K)$. Then, by the uniform convergence~\eqref{eq:uniform},
\begin{equation}
\label{upper}
w(K)
=
\lim_{n\to +\infty}w_{\overline{\nu}}(K_n)
\geq 
\lim_{n\to +\infty} w(K_n).
\end{equation}
Putting together~\eqref{lower} and~\eqref{upper} gives the desired continuity.
\end{proof}

We can now prove the existence of maximizers of $w(\,\cdot\,)h(\,\cdot\,)$ over $\mathcal{K}^d$.

\begin{lemma}
\label{lem:existence}
The functional $w(\,\cdot\,)h(\,\cdot\,)$ admits a maximizer in the class $\mathcal K^d$.
\end{lemma}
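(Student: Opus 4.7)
Since $w(\cdot)h(\cdot)$ is scale invariant ($w$ is positively $1$-homogeneous, $h$ is positively $(-1)$-homogeneous), after rescaling I may fix $w(K)=1$ and look for a maximizer of $h$ in the class $\mathcal{K}^d_1:=\{K\in\mathcal{K}^d\,:\,w(K)=1\}$ via the direct method. First I would establish a uniform upper bound: by Steinhagen's inequality, there is a dimensional constant $c_d>0$ with $r(K)\ge c_d$ for every $K\in\mathcal{K}^d_1$; since $K$ contains an inscribed ball of radius $r(K)$ and $h$ is monotone decreasing under inclusion,
\[
h(K)\le \frac{d}{r(K)}\le \frac{d}{c_d},
\]
so $\Lambda:=\sup_{\mathcal{K}^d_1} h<\infty$.

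Next I would construct a compact maximizing sequence. For any $K\in\mathcal{K}^d_1$, the family $\mathcal{F}_K:=\{K'\in\mathcal{K}^d_1\,:\, K'\subset K\}$ is nonempty and, by the Blaschke selection theorem together with the continuity of $w$ (\cref{lem-wcont}) and of the Lebesgue measure on convex bodies with respect to Hausdorff convergence, the volume functional $K'\mapsto |K'|$ attains a minimum on $\mathcal{F}_K$ at some $R\in \mathcal{F}_K$; by construction $R$ is a reduced body with $w(R)=1$, and by the monotonicity of $h$, $h(R)\ge h(K)$. Hence any maximizing sequence in $\mathcal{K}^d_1$ can be replaced by a sequence $(R_n)_n$ of reduced bodies with $w(R_n)=1$ and $h(R_n)\to\Lambda$. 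Invoking the classical uniform diameter-to-width bound $\diam(R)\le D_d\,w(R)$ for reduced bodies (see~\cite{LM11} and the references therein) and translating each $R_n$ so that an inscribed ball of radius $c_d$ is centered at the origin, $(R_n)_n$ lies in a fixed compact subset of $\mathbb{R}^d$; the Blaschke selection theorem yields, up to a subsequence, the Hausdorff convergence $R_n\to K_\infty$, and the inclusion $B_{c_d}\subset R_n$ passes to the limit, giving $K_\infty\in\mathcal{K}^d$.

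Finally I would pass to the limit: by \cref{lem-wcont}, $w(K_\infty)=1$, and by the continuity of the Cheeger constant with respect to Hausdorff convergence of convex bodies to a convex body (a standard fact, see, e.g.,~\cite{Par11}), $h(K_\infty)=\lim_n h(R_n)=\Lambda$, so $K_\infty$ realizes the supremum. The main obstacle is that $\mathcal{K}^d_1$ itself is not Hausdorff-compact, as it contains arbitrarily long thin bodies; I would bypass this by leveraging the monotonicity of $h$ to restrict to the subclass of reduced bodies, for which a uniform diameter-to-width comparison provides the required compactness.
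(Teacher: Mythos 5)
Your overall strategy (rescale to $w=1$, bound $h$ from above, extract a Hausdorff-convergent maximizing sequence, pass to the limit via \cref{lem-wcont} and the Hausdorff continuity of $h$) matches the paper's, and two of your ingredients are sound and even somewhat cleaner than the paper's: the upper bound $h(K)\le h(B_{r(K)})=d/r(K)\le d/c_d$ obtained from Steinhagen's inequality and the monotonicity of $h$ replaces the paper's L\"owner--John ellipsoid computation; and your volume-minimization argument is correct --- the family $\{K'\in\mathcal K^d:\ K'\subset K,\ w(K')=1\}$ is Hausdorff-closed by \cref{lem-wcont}, a volume minimizer $R$ exists, and $R$ must be reduced, since a convex body $R'\subsetneq R$ with $w(R')\ge 1$ could be shrunk to width exactly $1$ to produce an admissible competitor of strictly smaller volume.

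The gap is in the compactness step, where you invoke a ``classical uniform diameter-to-width bound $\diam(R)\le D_d\,w(R)$ for reduced bodies.'' This is a theorem only in the plane (for reduced $R\subset\mathbb R^2$ one has $\diam(R)\le\sqrt{2}\,w(R)$, with the quarter of a disk as the extremal body); for $d\ge 3$ no such bound is classical, and to the best of my knowledge the boundedness of $\diam(R)/w(R)$ over reduced bodies in $\mathbb R^d$ is an open question --- the literature on reduced bodies treats dimension $d\ge3$ as poorly understood (even the existence of reduced polytopes in $\mathbb R^3$ was settled only recently). Since the lemma is stated for all $d$, your argument is incomplete as written. The paper obtains the needed uniform boundedness by an elementary device you could substitute directly: after a rigid motion one may assume $w(K_n\cap 2B_1)=w(K_n)=1$, and since $h$ is monotone decreasing under inclusion, the truncated sets $\widetilde K_n=K_n\cap 2B_1$ still form a maximizing sequence, now uniformly contained in $2B_1$; Blaschke selection, \cref{lem-wcont} (which ensures the limit has width $1$, hence non-empty interior), and the continuity of $h$ then conclude exactly as in your last paragraph.
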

\begin{proof}
We start by showing that the shape functional is bounded from above, so that the supremum is finite.

Let $K\in \mathcal K^d$ and let $E$ be its \emph{inner L\"owner--John ellipsoid}, \emph{i.e.}, the ellipsoid  of maximal volume contained in $K$. On the one hand, since $E\subset K$ we have
\[
h(K)\leq \frac{P(E)}{|E|}.
\]
On the other hand, by John's Theorem~\cite[Thm.~10.12.2]{Sch14book}, up to a suitable translation, one has $K\subset d E$, and thus
\[
w(K)\leq w(dE) = d w(E).
\]
Combining these two inequalities, we deduce that it is enough to show the boundedness of $w(\,\cdot\,)P(\,\cdot\,)/|\,\cdot\,|$ among ellipsoids.

Let $(a_i)_{i=1}^d$ denote the ordered semi-axes of $E$, \emph{i.e.}, $0<a_1 \leq \ldots\leq a_d$. Then\footnote{The estimate on the perimeter comes from comparing that of the ellipsoid with a parallepiped with sides' length $(2a_i)_{i=1}^d$ using that convex bodies are outward perimeter minimizers.},
\[
w(E)=2a_1\,,\quad |E|= \omega_d \prod_{i=1}^d a_i\,,\quad P(E)\leq 2^{d} \sum_{i=1}^d \prod_{j\neq i}  a_j\,, 
\]
where $\omega_d$ is the Lebesgue measure of the unit ball in $\mathbb{R}^d$. Using these, we obtain
\[
\frac{w(E)P(E)}{|E|}
\leq 
\frac{2^{d+1} a_1 \sum_{i=1}^d \prod_{j\neq i} a_j}{\omega_d \prod_{i=1}^d a_i} 
=  
\frac{2^{d+1} a_1}{\omega_d}  \sum_{i=1}^d \frac{1}{a_i}
\leq 
\frac{2^{d+1} d}{\omega_d}.
\]
Hence, the shape functional is bounded from above on $\mathcal K^d$. 

Let $(K_n)_{n\in \mathbb N}\subset \mathcal K^d$ be a maximizing sequence. Since the functional is scale invariant, we may assume that $w(K_n)=1$ for every $n$. Moreover, by its invariance under rigid motion, we may also assume that
\[
|K_n \cap 2B_1|>0, \qquad \text{and} \qquad w(K_n \cap 2B_1)=1, \qquad\qquad \forall n\in\mathbb{N}.
\]
Define $\widetilde{K}_n = K_n\cap 2B_1$. Then $(\widetilde{K}_n)_{n\in\mathbb{N}}$ is a sequence of convex bodies with constant minimal width, uniformly contained in $2B_1$. Since $h(\,\cdot\,)$ is monotonic decreasing under set inclusion, $(\widetilde{K}_n)_{n\in\mathbb{N}}$ is still a maximizing sequence, since
\[
\sup_{\mathcal K^d\cap \{w=1\}} h \geq \lim_{n\to +\infty}h(\widetilde{K}_n) \geq \lim_{n\to +\infty}h(K_n) = \sup_{\mathcal K^d\cap \{w=1\}} h.
\]
By Blaschke Selection Theorem, up to a (not relabeled) subsequence, $\widetilde{K}_n \to K^*$ in the Hausdorff metric, for some compact convex set $K^*\subset \mathbb R^d$. By continuity of the minimal width proved in \cref{lem-wcont} under this metric, $w(K^*)=1$, and thus $K^*$ has non-empty interior, \emph{i.e.}, it is a convex body. 

To conclude the proof, we note that $h(\,\cdot\,)$ is continuous in the Hausdorff metric, see, \emph{e.g.}, in~\cite[Prop.~3.1]{Par17}. Therefore, $K^*$ is a maximizer.
\end{proof}

\begin{lemma}
\label{lem:non-ex_minimizer}
For every $K\in \K^d$, the following sharp inequality holds
\begin{equation}
\label{eq:lower_bound_wh}
w(K)h(K)>2,
\end{equation}
with equality asymptotically reached by sequences of flattening cylinders.
\end{lemma}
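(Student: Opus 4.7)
The plan is to derive the lower bound $w(K)h(K)\ge 2$ by exploiting that $K$ sits in a slab of width $w(K)$, to upgrade it to strictness via a rigidity analysis, and finally to check sharpness on a sequence of flattening cylinders. We may assume $d\ge 2$, the case $d=1$ being trivial with equality $wh=2$.

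By scale invariance we normalize $w(K)=1$ and, after a rotation, assume $K\subset \R^{d-1}\times[0,1]$ with the minimal width attained in the direction $e_d$. Denote by $\pi\colon \R^d\to \R^{d-1}$ the projection onto the first $d-1$ coordinates, and for a measurable set $E$ set $E^{x'}=\{t\in\R:(x',t)\in E\}$ and $N(E)=\{x'\in\R^{d-1}:\H^1(E^{x'})>0\}$. For any $E\subset K$ of finite perimeter with $|E|>0$, the inclusion $E^{x'}\subset[0,1]$ gives by Fubini
\[
|E|=\int_{N(E)} \H^1(E^{x'})\,dx' \le \H^{d-1}(N(E)),
\]
while the BV slicing identity combined with $|e_d\cdot n_E|\le 1$ yields
\[
P(E)\ge \int_{\partial^* E}|e_d\cdot n_E|\,d\H^{d-1}=\int_{\R^{d-1}}\H^0(\partial^*E^{x'})\,dx'\ge 2\H^{d-1}(N(E)),
\]
because each bounded, non-degenerate one-dimensional slice has at least two essential boundary points. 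Dividing gives $P(E)/|E|\ge 2$, hence $h(K)\ge 2$.

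To upgrade to strictness, apply this to a Cheeger set $E^*$ of $K$, whose existence in arbitrary dimension follows from standard BV-compactness (the masses of a minimizing sequence stay bounded below by the Euclidean isoperimetric inequality). If $h(K)=2$, both inequalities above are equalities for $E^*$. The first forces $\H^1((E^*)^{x'})=1$ a.e.\ on $N(E^*)$, hence $(E^*)^{x'}=[0,1]$ there, so that $E^*=F\times[0,1]$ up to null sets, where $F:=N(E^*)$ has positive $(d-1)$-dimensional measure. Decomposing the perimeter of such a prism,
\[
P(E^*)=2\H^{d-1}(F)+P(F),
\]
which, combined with the perimeter equality, forces $P(F)=0$. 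Yet $F\subset \R^{d-1}$ is bounded with positive measure, so the isoperimetric inequality in $\R^{d-1}$ gives $P(F)>0$, a contradiction. Therefore $h(K)>2$.

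For asymptotic sharpness, take the flattening cylinders $C_n:=nB'\times[0,1]$ with $B'$ the unit ball of $\R^{d-1}$ and $n\ge 1/2$, so that $w(C_n)=1$. A direct computation yields
\[
\frac{P(C_n)}{|C_n|}=\frac{2n^{d-1}\omega_{d-1}+(d-1)n^{d-2}\omega_{d-1}}{n^{d-1}\omega_{d-1}}=2+\frac{d-1}{n}\xrightarrow[n\to\infty]{}2,
\]
so $2<w(C_n)h(C_n)\le 2+(d-1)/n\to 2$, proving the claimed asymptotic sharpness. The main delicacy is the rigidity step: one must extract the full prismatic structure of $E^*$ from the saturation of the fiber-length bound and then close the contradiction via the isoperimetric inequality in the reduced dimension.
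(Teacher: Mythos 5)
Your proof is correct, but it follows a genuinely different route from the paper's. The paper never slices: it encloses $K$ (normalized to $w(K)=2$) in a long cylinder $C_\eta=\eta B_1^{d-1}\times[-1,1]$, invokes monotonicity of $h$ under inclusion to get $w(K)h(K)\ge 2h(C_\eta)$, and then obtains the \emph{strict} bound $h(C_\eta)>1$ by computing $\lambda_1(C_\eta)$ via separation of variables and applying the strict reverse Cheeger inequality $h>\tfrac{2}{\pi}\sqrt{\lambda_1}$ of Parini--Brasco; strictness is thus inherited from that spectral inequality. You instead prove directly that any finite-perimeter $E$ contained in a slab of width $1$ satisfies $P(E)\ge 2\H^{d-1}(N(E))\ge 2|E|$ by Fubini and one-dimensional BV slicing, and you handle the equality case by a rigidity argument: saturation forces the Cheeger set to be a prism $F\times[0,1]$, whose perimeter exceeds $2\H^{d-1}(F)$ by the lateral term $P(F)>0$. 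Your argument is self-contained and elementary (no spectral theory, no external reverse Cheeger inequality) and yields the slightly stronger local statement that the relative isoperimetric ratio in a slab of width $w$ is strictly larger than $2/w$ on every competitor, at the price of needing existence of a Cheeger set (standard, and already cited in the paper) and a careful equality analysis; the paper's proof is shorter modulo the cited spectral inequality. The sharpness computation on flattening cylinders is essentially identical in both. Two minor points: your parenthetical that $d=1$ gives equality $wh=2$ is accurate and shows the strict inequality implicitly presupposes $d\ge2$, as does the paper's cylinder construction; and in the rigidity step it is worth stating explicitly that $F=N(E^*)$ has finite perimeter (which follows from $P(E^*)<\infty$ and the product formula), so that $P(F\times[0,1])=2\H^{d-1}(F)+P(F)$ is legitimate.
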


\begin{proof}
Let $K\in \K^d$ be fixed with $w(K)=2$. For every $\eta\ge2$, consider the cylinder $C_\eta =\eta B_1^{d-1}\times [-1,1]$, where $ B_1^{d-1}$ is the unit ball in $\R^{d-1}$. For sufficiently large $\eta$, we have, up to a rotation and a translation, that $K\subset C_\eta$. Hence, by monotonicity of the Cheeger constant with respect to set inclusion, we obtain  
\begin{equation}
\label{eq:wh_strips}
w(K)h(K) \ge w(K) h(C_\eta) = 2 h(C_\eta).
\end{equation}
A simple computation, using separation of variables and orthogonality of Laplacian eigenfunctions, gives that the first Dirichlet--Laplacian eigenvalue of $C_\eta$ is
\begin{align*}
\lambda_1(C_\eta) &
= 
\lambda_1(\eta B_1^{d-1}\times [-1,1]) 
\\
&
=
\lambda_1(\eta B_1^{d-1})+\lambda_1([-1,1])
=
\frac{\lambda_1(B_1^{d-1})}{\eta^2} + \frac{\pi^2}{4},
\end{align*}
where we used the scaling properties of $\lambda_1(\,\cdot\,)$, and the well-known explicit value of $\lambda_1([-1,1])$. Applying the reverse Cheeger inequality proved in~\cite[Prop.~4.1]{Par17}---which is stated for $d=2$, but holds in all dimensions (see~\cite[Rem.~1.1]{Bra20})---we have
\[
h(C_\eta) 
> 
\frac{2}{\pi}\sqrt{\lambda_1(C_\eta)} 
= 
\frac{2}{\pi} \sqrt{\frac{\pi^2}{4} + \frac{\lambda_1(B_1^{d-1})}{\eta^2}} 
= 
\sqrt{1+\frac{4\lambda_1(B_1^{d-1})}{\pi^2\eta^2}} > 1.
\]
Combining this with~\eqref{eq:wh_strips}, we obtain 
\[
w(K)h(K) \ge  2 h(C_\eta) >2.
\]

It remains to show the sharpness of this bound. To this end, we estimate the shape functional $w(\,\cdot\,)h(\,\cdot\,)$ on the cylinder $C_\eta$. We have
\begin{align*}
w(C_\eta)h(C_\eta)  
&
\leq 
\frac{2P(C_\eta)}{|C_\eta|}
\\
&
=
\frac{2\omega_{d-1}\eta^{d-1}+2(d-1)\omega_{d-1}\eta^{d-2}}{\omega_{d-1}\eta^{d-1}}
= 
2 + \frac{2(d-1)}{\eta}.
\end{align*}
Hence, we have that $\limsup_{\eta\to +\infty} w(C_\eta)h(C_\eta)\leq 2$. Combining this with the lower bound~\eqref{eq:lower_bound_wh} completes the proof.
\end{proof}

\subsection{Proofs of \texorpdfstring{\cref{thm:main}}{Theorem 3.1}}
\label{sec:proof_main}

\begin{proof}[First proof of \cref{thm:main}]
Let $K\in \K^2$ be fixed. By~\cite[Thm.~1]{Fto21}, it holds
\begin{equation}
\label{ineq:ftouhi}
h(K)\leq \frac{1}{r(K)}+\sqrt{\frac{\pi}{|K|}},
\end{equation}
whereas by~\cite[Tab.~2.1, $(A,w)$ and $(r,w)$]{SA00}, we have
\begin{equation}
\label{ineq:scott}
w(K)\leq 3r(K)\quad \text{and} \quad (w(K))^2\le \sqrt{3}|K|,
\end{equation}
where equality holds in both only for equilateral triangles. Combining~\eqref{ineq:ftouhi} and~\eqref{ineq:scott}, the claim follows immediately.
\end{proof}

The second proof relies on two different ingredients: first, a, nowadays, classical theorem by Kawohl--Lachand-Robert~\cite[Thm.~1]{KLR05} (see also the more general statement in~\cite[Cor.~5.5]{LS20}); second, the following lemma on the width of inner parallel sets, which, to the best of our knowledge, is new and of intrinsic interest.

\begin{lemma}
\label{lem:width}
Let $K\in\mathcal{K}^2$. For all $t\in[0,r(K)]$, it holds
\[
w(K_{-t}) \ge w(K)-3t.
\]
Further, equality is attained for equilateral triangles. 
\end{lemma}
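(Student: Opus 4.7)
My plan is to prove the lemma by exhibiting a homothetic copy of $K$ inside $K_{-t}$, and then combining the resulting width bound with the classical P\'al inequality $w(K)\le 3r(K)$ recalled in~\eqref{ineq:scott}.

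The main step is the following claim: let $c$ be a point with $B_{r(K)}(c)\subseteq K$ (an incenter of $K$) and set $\lambda:=(r(K)-t)/r(K)$; then the homothety $S(x):=c+\lambda(x-c)$ satisfies $S(K)\subseteq K_{-t}$. To verify this, I fix $y=c+\lambda(x-c)\in S(K)$ and show $B_t(y)\subseteq K$. For any vector $z$ with $\|z\|\le t$, set $v:=z/t$; then $\|v\|\le 1$, so $c+r(K)\,v\in B_{r(K)}(c)\subseteq K$. Using $(1-\lambda)r(K)=t$, we can write
\[
y+z \;=\; \lambda x + (1-\lambda)c + z \;=\; \lambda x + (1-\lambda)\bigl(c+r(K)\,v\bigr),
\]
which is a convex combination of two points of $K$, hence in $K$ by convexity. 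Thus $B_t(y)\subseteq K$, i.e., $y\in K_{-t}$, proving the claim.

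The conclusion then follows quickly: since $S(K)$ is homothetic to $K$ with ratio $\lambda$, its width is $\lambda\,w(K)$, so by monotonicity of the width under set inclusion
\[
w(K_{-t}) \;\ge\; \lambda\,w(K) \;=\; w(K) - \frac{w(K)}{r(K)}\,t \;\ge\; w(K) - 3t,
\]
where the last inequality is P\'al's. The sharpness for equilateral triangles is immediate: both displayed inequalities become equalities (P\'al is saturated, and $K_{-t}$ is itself a homothetic copy of $K$), giving $w(K_{-t})=w(K)-3t$. The only nontrivial ingredient is the inclusion $S(K)\subseteq K_{-t}$; beyond this clean convexity check, I do not anticipate any substantive obstacle.
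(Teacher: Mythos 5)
Your proof is correct, but it takes a genuinely different and considerably more elementary route than the paper. The paper proves the lemma by approximating $K$ with convex polygons, writing the width of a polygon as $\min_i\max_j\dist(v_j,\ell_i)$, partitioning the sides according to the geometry at the farthest vertex $v_{\sigma(i)}$ (parallel opposite side, interior angle $\ge\pi/3$, interior angle $<\pi/3$), showing the minimum is never attained in the third class, and bounding the decay of $\dist(v_{\sigma(i)},\ell_i)$ by $3t$ in the other two via the estimate $\lambda_{\sigma(i)}=t/\sin(\alpha_{\sigma(i)}/2)\le 2t$; it then iterates over the intervals on which the number of sides of $K_{-t}$ is constant and passes to the limit. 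You instead use the classical inclusion
\[
c+\frac{r(K)-t}{r(K)}\,(K-c)\;\subseteq\;K_{-t},
\]
whose convexity verification you carry out correctly (the degenerate case $t=0$ is trivial and $t=r(K)$ causes no harm since then the right-hand side of the lemma is nonpositive by P\'al), together with monotonicity and $1$-homogeneity of the minimal width and P\'al's inequality $w(K)\le 3r(K)$, which is exactly the first inequality in~\eqref{ineq:scott} and is already used elsewhere in the paper. Your argument avoids polygonal approximation and the continuity of $w$ entirely, and in fact yields the slightly stronger intermediate bound $w(K_{-t})\ge\bigl(1-t/r(K)\bigr)w(K)$, which dominates $w(K)-3t$ for every convex body. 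Your treatment of the equality case for equilateral triangles (both P\'al and the inclusion are saturated, so $K_{-t}$ is the homothet itself) is also correct, and everything the paper later extracts from the lemma (\cref{cor:width}) follows equally well from your version. The only thing the paper's longer argument buys is a self-contained, explicitly geometric picture of which side--vertex pairs control the width of the inner parallel set; your proof outsources the sharp constant $3$ to P\'al's inequality, which the paper cites anyway.
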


\begin{proof}
We begin by fixing notation. Let $K$ be a convex polygon with $N\ge 3$ vertexes, ordered as $v_1,\dots,v_N$. We let $\ell_i$ be the side of $K$ connecting $v_i$ and $v_{i+1}$, with the usual convention that $v_{N+1}=v_1$. Let $\alpha_i$ be the interior angle of $K$ at the vertex $v_i$, \emph{i.e.}, the angle between the sides $\ell_{i-1}$ and $\ell_i$. 

For convex polygons $K$ the minimal width can be expressed as 
\[
w(K) = \min_{i}\,\max_{j}\, \dist(v_j, \ell_{i}) \qquad i,j \in \llbracket 1,N \rrbracket.
\]
For every fixed $i\in\llbracket 1,N\rrbracket$, let $\sigma(i)\in\llbracket 1,N\rrbracket$ be such that
\[
\max_{j}\, \dist(v_j,\ell_i) = \dist(v_{\sigma(i)},\ell_i),
\]
so that
\begin{equation}
\label{eq:wk-polygon}
w(K) = \min_{i}\, \dist(v_{\sigma(i)},\ell_i).
\end{equation}
Note also that
\begin{equation}
\label{eq:distance_i-sigma}
\sigma(i)\neq i \quad \text{and} \quad \sigma(i)\neq i+1, \quad  \forall i\in\llbracket 1,N\rrbracket,
\end{equation}
where, again, we identify the index $N+1$ with $1$.

Consider the inner sets $(K_{-t})_{t}$, for $t> 0$. The number of their sides is decreasing with $t$. Actually, the function $t\in[0,r(\Omega)]\longmapsto n(t)$ (where $n(t)$ is the number of sides of $\Omega_{-t}$) is piecewise constant and decreasing. Consider the partition $0=t_0<t_1<\dots<t_{N_K}=r(\Omega)$ such that
\[
\forall k\in\llbracket 0,N_K-1\rrbracket,
\quad 
\forall t\in [t_k,t_{k+1}),
\quad
n(t) = n_k,
\]
where $(n_k)_{k}$ is a strictly decreasing finite sequence of natural numbers.

For every $t\in [0,t_1)$, the vertexes $v_1,\dots,v_N$ map to $N$ distinct points $v_1^t,\dots,v_N^t$ that are the vertexes of the $N$-gon $K_{-t}$. Let $\ell_i^t$ be the side of $K_{-t}$ connecting $v^t_i$ and $v^t_{i+1}$, and let $\alpha_i^t$ be the interior angle of $K_{-t}$ at the vertex $v_i^t$. Note that the interior angles of the polygons $K$ and $K_{-t}$ are equal, \emph{i.e.},
\[
\forall t\in [0,t_1),
\quad 
\forall i\in\llbracket 1,N\rrbracket,
\quad
{\alpha_i} = {\alpha^t_i}.
\]

We consider the following partition of indexes $i\in\llbracket 1,N\rrbracket$:
\begin{itemize}
    \item[(i)] $\Phi = \{\,i \,:\, \text{either $\ell_{\sigma(i)-1}$ or $\ell_{\sigma(i)}$ is parallel to $\ell_i$}\,\}$;
    \item[(ii)] $\Psi_1 = \{\,i \notin \Phi \,:\, \alpha_{\sigma(i)}\ge \pi/3\,\}$;
    \item[(iii)] $\Psi_2 = \{\,i\notin \Phi \,:\, \alpha_{\sigma(i)}< \pi/3\,\}$.
\end{itemize}
We claim that
\begin{equation}
\label{eq:min_not_psi2}
w(K_{-t}) = \min_{i\in \Phi \cup \Psi_1} \dist(v_{\sigma(i)}^t,\ell_i^t).
\end{equation}

Fix $i\in \Psi_2$. The segments $\ell_i^t$, $\ell_{\sigma(i)-1}^t$, and $\ell_{\sigma(i)}^t$ are distinct because of~\eqref{eq:distance_i-sigma}, and are not collinear due to convexity of $K$. Let $T_i^t$ be the triangle whose vertexes are the three (pairwise) intersections of the lines extending these segments. Since $\alpha_{\sigma(i)}< \pi/3$, the side $\ell_i^t$ does not belong to the line containing the largest side of $T_i^t$, thus 
\[
\dist(v_{\sigma(i)}^t,\ell_i^t) > w(T_i^t) \ge w(K_{-t}),
\]
where the last inequality follows from $K_{-t}\subseteq T_i^t$. This proves~\eqref{eq:min_not_psi2}.

We now estimate $\dist(v_{\sigma(i)}^t,\ell_i^t)$ from below for indexes $i\in\Phi\cup\Psi_1$. If $i\in\Phi$, we have 
\begin{equation}
\label{eq:k-in-phi}
\dist(v_{\sigma(i)}^t,\ell_i^t) 
= 
\dist(v_{\sigma(i)},\ell_i)-2t 
>
\dist(v_{\sigma(i)},\ell_i)-3t.
\end{equation}
If $i\in\Psi_1$, see also \cref{fig:idea_inner_sets}, denoting by $\lambda_{\sigma(i)}$ the distance between $v_{\sigma(i)}$ and $v_{\sigma(i)}^t$, we have 
\begin{align}
\label{eq:k-in-psi1}
\dist(v_{\sigma(i)}^t,\ell_i^t) 
&
\ge
\dist(v_{\sigma(i)},\ell_i) -\lambda_{\sigma(i)} -t
= 
\dist(v_{\sigma(i)},\ell_i) - \frac{t}{\sin(\alpha_{\sigma(i)}/2)} -t
\nonumber
\\
&
\ge 
\dist(v_{\sigma(i)},\ell_i) - \frac{t}{\sin (\pi/6)} -t
= 
\dist(v_{\sigma(i)},\ell_i) - 3t,
\end{align}
using that $\pi/2>\alpha_{\sigma(i)}/2 \ge \pi/6$ for $i\in\Psi_1$.

\begin{figure}[t]
\centering

\begin{tikzpicture}[x=0.75pt,y=0.75pt,yscale=-1,xscale=1]

\tikzstyle{mainline} = [line width=1.5]
\tikzstyle{dashedline} = [dash pattern=on 1.7pt off 2.7pt, line width=1]
\tikzstyle{ktline} = [mainline, color=red!80!black]
\tikzstyle{auxline} = [dash pattern=on 4.5pt off 4.5pt, color=blue!70!black]
\tikzstyle{arrowhead} = [<->, >=latex, thick, color=blue!70!black]
\tikzstyle{labelred} = [color=red!80!black]
\tikzstyle{labelblue} = [color=blue!70!black]

\draw[mainline] (310.33,41.33) -- (210.33,134.33); 
\draw[mainline] (310.33,41.33) -- (429.33,70.33); 
\draw[mainline] (463.33,311.33) -- (230.33,309.33);
\draw[mainline] (203.33,275.33) -- (230.33,309.33); 
\draw[mainline] (463.33,311.33) -- (490.33,262.33); 

\draw[dashedline] (310.33,41.33) -- (323.33,90.33);
\draw[dashedline] (463.33,311.33) -- (432.33,270.33);
\draw[dashedline] (251.33,269.33) -- (230.33,309.33);
\draw[dashedline] (323.33,90.33) -- (334.19,47.28);

\draw[ktline] (323.33,90.33) -- (248.33,160.33);
\draw[ktline] (323.33,90.33) -- (419.33,112.33);
\draw[ktline] (432.33,270.33) -- (251.33,269.33);
\draw[ktline] (453.33,236.33) -- (432.33,270.33);
\draw[ktline] (234.33,247.33) -- (251.33,269.33);

\draw[auxline,arrowhead] (172.33,307.33) -- (172.33,45.33);
\draw[auxline,arrowhead] (323.35,265.33) -- (323.35,93.33);
\draw[auxline,arrowhead] (360.33,307.33) -- (360.33,272.33);
\draw[auxline,arrowhead] (434.11,113.44) -- (443.56,78.23);

\draw[fill=green!70!black, opacity=0.5] (320.83,44.48) .. controls (319.7,47.91) and (316.78,50.59) .. (313.11,51.5) -- (310.33,41.33) -- cycle;

\draw (339.72,48.48) -- (338.33,54.33) -- (332.8,53.15);

\node at (225,323) {$v_i$};
\node at (470,323) {$v_{i+1}$};
\node at (300,30) {$v_{\sigma(i)}$};
\node[labelblue] at (120,170) {$\dist(v_{\sigma(i)},\ell_i)$};
\node[labelred] at (235,270) {$v_i^t$};
\node[labelred] at (452,270) {$v_{i+1}^t$};
\node[labelblue] at (373,170) {$\dist(v_{\sigma(i)}^t,\ell_i^t)$};
\node[labelred] at (343,82) {$v_{\sigma(i)}^t$};
\node[labelblue] at (371,290) {$t$};
\node[labelblue] at (450,100) {$t$};
\node at (323,54) {$\alpha$};

\end{tikzpicture}

\caption{The polygons $K$ (in black) and $K_{-t}$ (in red), and $\alpha = \alpha_{\sigma(i)}/2$.}
\label{fig:idea_inner_sets}
\end{figure}
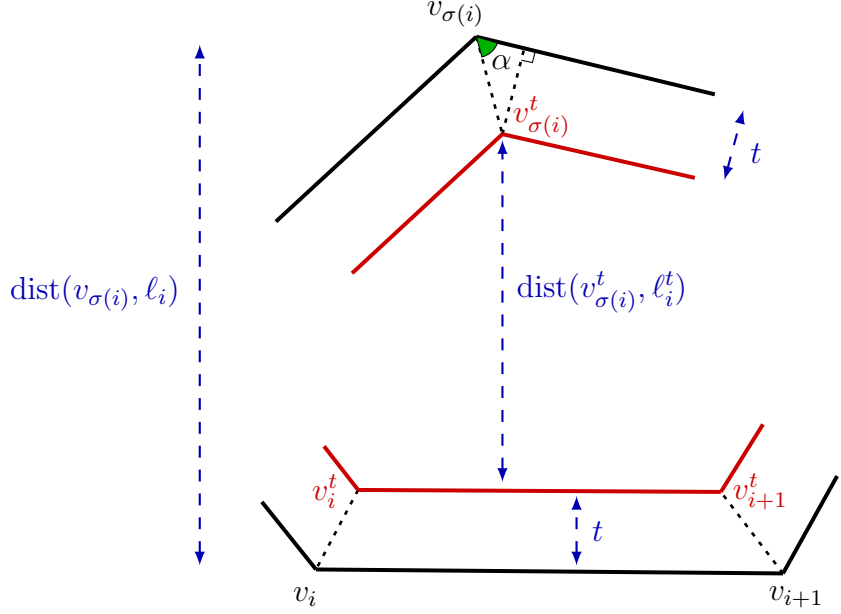

Combining first~\eqref{eq:min_not_psi2}--\eqref{eq:k-in-psi1} and using~\eqref{eq:wk-polygon}, we get
\begin{align*}
w(K_{-t}) &=  \min_{i\in\Phi\cup\Psi_1} \dist(v_{\sigma(i)}^t,\ell_i^t)
\ge 
\min_{i\in\Phi\cup\Psi_1} \dist(v_{\sigma(i)},\ell_i)-3t
\\
&
\ge
\min_i \dist(v_{\sigma(i)},\ell_i)-3t
=
w(K)-3t, \qquad \forall t\in[0, t_1).
\end{align*}

Owing to the continuity of the map $t\mapsto K_{-t}$ and that of $w(\,\cdot\,)$ proved in \cref{lem-wcont} with respect to the Hausdorff metric, the above inequality extends to the closed interval $[0,t_1]$.
Repeating the same argument for $K_{-t_1}$ and iterating until $t=r(K)$ yields the desired inequality when $K$ is a convex polygon. The general case of convex bodies follows by approximation, since convex polygons are dense in $\K^2$ in the Hausdorff metric and the minimal width is continuous in such a metric, see \cref{lem-wcont}.
\end{proof}

\begin{corollary}
\label{cor:width}
Let $K\in\mathcal{K}^2$. For all $t\in(0,r(K))$, it holds
\[
|K_{-t}| \ge |(T_e)_{-t}|,
\]
where $T_e$ is an equilateral triangle with $w(T_e)=w(K)$. Further, the inequality is strict unless $K$ is an equilateral triangle.
\end{corollary}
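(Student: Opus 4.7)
The plan is to combine Lemma~\ref{lem:width} with the planar inequality $w(A)^2 \le \sqrt{3}\,|A|$, valid for any planar convex body $A$ with equality only for equilateral triangles (see~\cite[Tab.~2.1, $(A,w)$]{SA00}), applied to the inner parallel body $A = K_{-t}$. If $w(K)-3t \le 0$, then $|(T_e)_{-t}| = 0$ and the inequality is trivial; moreover, strictness for non-equilateral $K$ follows from the strict case of $w \le 3r$~\cite[Tab.~2.1, $(r,w)$]{SA00}, which gives $r(K) > w(K)/3$, and hence $|K_{-t}| > 0$ as soon as $t<r(K)$. If instead $w(K)-3t > 0$, chaining the two estimates yields
\[
|K_{-t}| \ge \frac{w(K_{-t})^2}{\sqrt{3}} \ge \frac{(w(K)-3t)^2}{\sqrt{3}} = |(T_e)_{-t}|,
\]
the last equality recording that $(T_e)_{-t}$ is equilateral of width $w(K)-3t$.

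For the strict inequality when $K$ is not equilateral, I argue by contradiction. Equality forces both steps above to be tight; in particular, by the equality case of $w^2\le \sqrt{3}\,|\cdot|$, $K_{-t}$ is an equilateral triangle of width $w(K)-3t$. To transfer this rigidity back to $K$, I invoke Pál's theorem: $K$ contains an equilateral triangle $T$ with $w(T)=w(K)$, whose inner parallel $T_{-t}$ is then equilateral of the same width $w(K)-3t$, so $|T_{-t}| = |K_{-t}|$. Since inner parallel is monotone under set inclusion, $T_{-t}\subseteq K_{-t}$, and the equality of measures forces $T_{-t} = K_{-t}$. The semigroup identity $(K_{-t})_{-(s-t)} = K_{-s}$, valid for convex bodies, then propagates this equality to $K_{-s} = T_{-s}$ for every $s \ge t$; in particular these two families become empty at the same parameter, so $r(K) = r(T) = w(K)/3$. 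Invoking one last time the equality case of $w \le 3r$, this forces $K$ to be equilateral, the desired contradiction.

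The main subtle point lies precisely in this rigidity analysis: the equality case of Lemma~\ref{lem:width} is not confined to equilateral triangles (polygons possessing an appropriate $\pi/3$-angled vertex that realizes the minimum width may attain it), so the two-step inequality chain alone does not yield strictness. Pál's theorem together with the semigroup structure of the inner parallel operation bypasses this obstruction by transforming pointwise equality of inner parallels into equality of inradii, reducing everything to the clean rigidity of $w \le 3r$.
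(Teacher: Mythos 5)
Your derivation of the inequality itself is correct and somewhat more direct than the paper's: chaining \cref{lem:width} with P\'al's inequality $w(A)^2\le\sqrt{3}\,|A|$ applied to $A=K_{-t}$ gives $|K_{-t}|\ge (w(K)-3t)^2/\sqrt{3}=|(T_e)_{-t}|$ in one stroke, whereas the paper splits $[0,r(K))$ into three ranges according to the first time $K_{-t}$ becomes an equilateral triangle. Your treatment of the range $t\ge w(K)/3$, via the strict inequality $r(K)>w(K)/3$ for non-equilateral $K$, is also fine.

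The rigidity argument, however, rests on a false statement. There is no theorem asserting that every planar convex body $K$ contains an equilateral triangle $T$ with $w(T)=w(K)$: the disk of radius $R$ has width $2R$, while the largest equilateral triangle it contains is the one inscribed in its boundary circle, whose width (equal to its height) is $3R/2<2R$. P\'al's results give the lower bounds $|K|\ge w(K)^2/\sqrt{3}$ and $r(K)\ge w(K)/3$, not the containment of an equilateral triangle of full width, so the triangle $T$ and the identification $T_{-t}=K_{-t}$ cannot be produced as written. Fortunately, the detour through $T$ is unnecessary: once the equality case of P\'al's inequality forces $K_{-t}$ to be an equilateral triangle of width $w(K)-3t$, the semigroup identity $(K_{-t})_{-u}=K_{-(t+u)}$, which you already invoke, yields $r(K)=t+r(K_{-t})=t+\tfrac{1}{3}\bigl(w(K)-3t\bigr)=w(K)/3$, and the equality case of $w\le 3r$ then forces $K$ to be equilateral---exactly the contradiction you were after. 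With this repair your argument is correct and genuinely different from (and shorter than) the paper's three-range comparison, which instead relies on the monotonicity $K_{-\tau_K}\supseteq T_{-\tau_K}$ and the strict inequality $r(K)>r(T)$.
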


\begin{proof}
Let $K\in\K^2$ and $T_e$ be as in the statement. For brevity, let $T=T_e$. 

If $K$ is an equilateral triangle, the claim is trivial. Suppose it is not. Define
\[
\tau_K=\sup\{\,t\in [0,r(K))\,:\, K_{-t}\ \text{is not an equilateral triangle}\,\}.
\]

In this case, we have $r(K)> r(T)$, see, \emph{e.g.},~\cite[Tab.~2.1, $(r,w)$]{SA00}. We will now compare the functions
\[
t\longmapsto|K_{-t}|\qquad \text{and}\qquad t\longmapsto|T_{-t}|
\]
on the intervals $(0,\tau_K)$, $[\tau_K,r(T))$, and $[r(T),r(K))$. 

\noindent\emph{Step 1: comparison on $(0, \tau_K)$.} By definition, for $t\in (0,\tau_K)$, the set $K_{-t}$ is not an equilateral triangle. Therefore, using the inequality and the equality cases of~\cite[Tab.~2.1, $(A,w)$]{SA00}, the equality case of \cref{lem:width}, the assumption $w(K)=w(T)$, and the inequality in \cref{lem:width},
\begin{align*}
|K_{-t}|
>
\frac{w(K_{-t})^2}{\sqrt{3}} 
= 
|T_{-t}\,|\frac{w(K_{-t})^2}{w(T_{-t})^2} 
&= |T_{-t}|\left(\frac{w(K_{-t})}{w(T)-3t}\right)^2\\ &= |T_{-t}|\left(\frac{w(K_{-t})}{w(K)-3t}\right)^2
\ge |T_{-t}|.
\end{align*}

\noindent\emph{Step 2: comparison on $[\tau_K, r(T))$.} In this range, both $K_{-t}$ and $T_{-t}$ are equilateral triangles. Further, for $t=\tau_K$, the inequality holds true by the previous step and the continuity of the map $t\mapsto |\Omega_{-t}|$. Thus, up to a rotation and a translation $K_{-\tau_K}\supseteq T_{-\tau_K}$. Therefore, owing to the properties of inner parallel sets~\cite[Sect.~3.1]{Sch14book}, and setting $s=t-\tau_K$, we have
\[
K_{-t} = (K_{-\tau_K})_{-s} \supseteq (T_{-\tau_K})_{-s} = T_{-t},
\]
and the inequality follows immediately.

\noindent\emph{Step 3: comparison on $[r(T), r(K))$.} In this range, $T_{-t}$ is empty, whereas $K_{-t}$ is not, so the inequality is trivially strict.
\end{proof}

We now recall a corollary of~\cite[Thm.~1]{KLR05}, originally stated in a slightly
different form in~\cite[Lem~2.9]{FMP24}.



\begin{corollary}
\label{cor:KLR05}
Let $K, H\in\mathcal{K}^2$. If for all $t\in (0,\max\{r(K),r(H)\})$ one has $|K_{-t}|> |H_{-t}|$, then $h(K)< h(H)$.
\end{corollary}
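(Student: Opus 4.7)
The plan is to reformulate the hypothesis through the Kawohl--Lachand-Robert characterization of the Cheeger constant for planar convex bodies, which is precisely the result cited in the statement. That theorem asserts that, for every $K\in\mathcal{K}^2$, the value $t_K := 1/h(K)$ is the unique $t\in(0,r(K))$ satisfying $|K_{-t}|=\pi t^2$, and the Cheeger set of $K$ is $K_{-t_K}\oplus t_K B_1$. Since $h(K)<h(H)$ is equivalent to $t_K>t_H$, the corollary reduces to comparing these two distinguished radii, which is cleaner than comparing the Cheeger constants directly.

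First I would check that $t_H = 1/h(H)$ lies in the range where the hypothesis is available. Since $|H_{-t_H}| = \pi t_H^2 > 0$, the inner parallel set $H_{-t_H}$ has positive area, forcing $t_H<r(H)\le \max\{r(K),r(H)\}$. Applying the hypothesis at $t = t_H$ and using the KLR identity for $H$, we obtain
\[
|K_{-t_H}| \,>\, |H_{-t_H}| \,=\, \pi t_H^2.
\]
Next, I would consider $\varphi_K(t):= |K_{-t}|-\pi t^2$ on $[0,r(K)]$. Because $t\mapsto |K_{-t}|$ is non-increasing (the inner parallel sets form a decreasing family in $t$) and $t\mapsto \pi t^2$ is strictly increasing, $\varphi_K$ is strictly decreasing. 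The KLR characterization applied to $K$ identifies its unique zero as $t_K$. The strict inequality $\varphi_K(t_H)>0=\varphi_K(t_K)$ together with strict monotonicity then forces $t_H<t_K$, whence $h(K) = 1/t_K < 1/t_H = h(H)$, as claimed.

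Essentially all of the substantive content is packaged inside the KLR characterization, so there is no genuine obstacle once that result is invoked; the argument is a one-line monotonicity comparison. The only mild bookkeeping is the verification that $t_H$ belongs to $(0,\max\{r(K),r(H)\})$ and the observation that $\varphi_K$ is strictly decreasing, both of which are immediate from the definitions.
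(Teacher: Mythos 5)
Your proof is correct and follows exactly the intended route: the paper does not write out a proof of this corollary, merely citing~\cite[Thm.~1]{KLR05} (and~\cite[Lem.~2.9]{FMP24}), and your derivation---evaluating the hypothesis at $t_H=1/h(H)$, checking that $t_H$ lies in the admissible range, and using the strict monotonicity of $t\mapsto |K_{-t}|-\pi t^2$ to compare the two Kawohl--Lachand-Robert radii---is precisely the natural argument being alluded to. No gaps.
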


The alternative proof of \cref{thm:main} is now an immediate consequence of \cref{cor:width,cor:KLR05}.

\begin{proof}[Second proof of \cref{thm:main}]
Let $K\in\mathcal{K}^2$ and assume it is not an equilateral triangle. Let $T_e$ be an equilateral triangle with $w(T_e)=w(K)$. Then, by \cref{cor:width,cor:KLR05}, we immediately deduce that $h(K)< h(T_e)$.
\end{proof}

\subsection{Proof of \texorpdfstring{\cref{thm:stability}}{Theorem 3.4}}
\label{sec:proof_stability}

Let $\eta < 3^{\sfrac 14}\, \pi^{\sfrac 12}$, and fix $\varepsilon\in(0,\eta)$. Let $K\in\mathcal{K}^2$ be such that $\delta_{wh}(K)= \varepsilon$, \emph{i.e.},
\[
w(K)h(K)= 3+ \sqrt{\pi\sqrt{3}}-\varepsilon.
\]
Using~\eqref{ineq:ftouhi}, the first inequality in~\eqref{ineq:scott}, its equality case, and the quantitative P\'al's inequality from~\cite[Thm.~1.2]{LZ25}, we have
\[
w(K)h(K)
\leq 
\frac{w(K)}{r(K)}+\sqrt{\pi}\, \frac{w(K)}{\sqrt{|K|}}\leq 3 + \frac{\sqrt{\pi}}{\sqrt{c_2\alpha_E(K)+\frac{1}{\sqrt{3}}}},
\]
where $c_2>0$ is the constant, independent of $K$, appearing in the statement of ~\cite[Thm.~1.2]{LZ25}.  Combining this inequality with the previous identity and rearranging\footnote{One can slightly improve~\eqref{eq:computation-c} not discarding the negative term in $\varepsilon^2$.} yields
\begin{equation}
\label{eq:computation-c}
c_2\alpha_E(K) 
\leq 
\frac{\pi}{(\sqrt{\pi\sqrt{3}}-\eps)^2}-\frac{1}{\sqrt{3}} 
\le
\frac{2\sqrt{\pi\sqrt{3}}}{\sqrt{3}(\sqrt{\pi\sqrt{3}}-\varepsilon)^2}\, \varepsilon.
\end{equation}
Using that $\delta_{wh}(K) = \varepsilon < \eta$, setting
\begin{equation}
\label{eq:value-c-eta}
C=C(\eta) = \frac{2\sqrt{\pi\sqrt{3}}}{c_2 \sqrt{3} (\sqrt{\pi\sqrt{3}}-\eta)^2},
\end{equation}
and accordingly rearranging~\eqref{eq:computation-c}, the claim follows. \qed

\begin{proposition}
\label{prop:sharpness}
The linear dependence on $\alpha_E(\,\cdot\,)$ in~\eqref{eq:stability} is sharp, \emph{i.e.}, for any $p<1$, there does not exist a constant $C>0$ such that
\[
[\alpha_E(K)]^p \le C\delta_{wh}(K), \qquad \forall K\in\mathcal{K}^2\,:\, \delta_{wh}(K)<\eta,
\]
where $\eta$ is in the range given in \cref{thm:stability}.
\end{proposition}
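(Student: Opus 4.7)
The plan is to exhibit an explicit family $(K_\eps)_{\eps>0} \subset \mathcal{K}^2$ of convex bodies converging to a fixed equilateral triangle $T$ in the Hausdorff metric, along which both $\delta_{wh}(K_\eps)$ and $\alpha_E(K_\eps)$ decay linearly in $\eps$. If I can establish the bounds $\delta_{wh}(K_\eps) \le C_1 \eps$ and $\alpha_E(K_\eps) \ge c_1 \eps$ for small $\eps>0$, then for every $p<1$,
\[
\frac{[\alpha_E(K_\eps)]^p}{\delta_{wh}(K_\eps)} \ge \frac{c_1^p}{C_1}\, \eps^{p-1} \xrightarrow{\eps\to 0^+} +\infty,
\]
ruling out any uniform constant in the inequality $[\alpha_E(\,\cdot\,)]^p \le C \delta_{wh}(\,\cdot\,)$; note that $\delta_{wh}(K_\eps) \to 0$, so these sets eventually lie in the admissible class $\{\delta_{wh}<\eta\}$.

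A natural choice is $K_\eps := T \oplus \eps B_1$, with $T$ a fixed equilateral triangle of minimum width $w_0$. Standard Minkowski algebra gives $w(K_\eps) = w_0 + 2\eps$ together with
\[
|K_\eps| = \frac{w_0^2}{\sqrt{3}} + 2\sqrt{3}\, w_0\, \eps + \pi \eps^2, \qquad (K_\eps)_{-t} = T_{-(t-\eps)}\ \text{for } t \in [\eps, r(K_\eps)].
\]
For the upper bound on $\delta_{wh}$, I would combine the identity for $w(K_\eps)$ with a quantitative expansion $h(K_\eps) = h(T) + O(\eps)$. The latter follows from the Kawohl--Lachand-Robert characterization $h(\,\cdot\,) = 1/t^*$, where $t^*$ solves $|(\,\cdot\,)_{-t^*}| = \pi (t^*)^2$: using the inner-parallel-set identity above, the equation for $K_\eps$ reduces to $|T_{-(t^*-\eps)}| = \pi (t^*)^2$, and an implicit-differentiation argument at $\eps=0$ yields $t^*(K_\eps) - t^*(T) = O(\eps)$.

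For the lower bound $\alpha_E(K_\eps) \ge c_1 \eps$, I would argue purely by a volume comparison, which sidesteps any delicate symmetry analysis. For \emph{any} equilateral triangle $T'$ with $w(T') = w(K_\eps) = w_0+2\eps$, one has $|T'| = (w_0+2\eps)^2/\sqrt{3}$, so
\[
|K_\eps| - |T'| = \frac{2w_0}{\sqrt{3}}\,\eps + O(\eps^2) > 0
\]
for small $\eps$. If $d_H(K_\eps, T')\le\delta$, then $K_\eps\subseteq T'\oplus \delta B_1$, and Steiner's formula yields $|T'| + \delta P(T') + \pi \delta^2 \ge |K_\eps|$, which, together with $P(T') = 2\sqrt{3}(w_0+2\eps)$, forces $\delta \ge \eps/3 - O(\eps^2)$. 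Taking the infimum over $T'$ gives $\alpha_E(K_\eps) \ge c_1 \eps$ for small $\eps>0$.

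The main obstacle is the quantitative step $h(K_\eps) = h(T) + O(\eps)$: qualitative continuity of $h$ in the Hausdorff distance is standard (see \cite[Prop.~3.1]{Par17}), but extracting the linear modulus requires exploiting the explicit structure of $K_\eps$ through Kawohl--Lachand-Robert, as described above. The lower bound for $\alpha_E$, in contrast, is essentially a one-line consequence of the Steiner formula.
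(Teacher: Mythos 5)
Your proposal is correct, and it takes a genuinely different route from the paper. The paper's counterexample is the truncated triangle $R_\varepsilon = T_0 \cap \{y \le \sqrt{3}-\varepsilon\}$: slicing off a corner leaves the Cheeger set untouched, so $h(R_\varepsilon)=h(T_0)$ exactly by~\cite[Thm.~1]{KLR05} and the deficit computation is a one-line identity $\delta_{wh}(R_\varepsilon)=\varepsilon h(T_0)$; the lower bound on $\alpha_E(R_\varepsilon)$ is then obtained by a diameter comparison (the truncation preserves $\diam$ while decreasing $w$, so any width-matching equilateral triangle must be Hausdorff-far). Your family $K_\varepsilon = T\oplus\varepsilon B_1$ reverses which half is easy: the asymmetry lower bound becomes a clean volume/Steiner-formula comparison (robust, no case analysis), while the deficit upper bound now requires a quantitative modulus for $h(K_\varepsilon)$. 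Your implicit-differentiation plan for that step works, but you can shortcut it: since $|(K_\varepsilon)_{-t}| = |T_{-(t-\varepsilon)}|$ for $t\ge\varepsilon$, evaluating the Kawohl--Lachand-Robert equation at $t = t^*(T)+\varepsilon$ gives $|(K_\varepsilon)_{-t}| = \pi t^*(T)^2 < \pi t^2$, whence $t^*(K_\varepsilon) \le t^*(T)+\varepsilon$ and $h(K_\varepsilon)\ge h(T)-h(T)^2\varepsilon$ directly, which combined with $w(K_\varepsilon)=w(T)+2\varepsilon$ yields $\delta_{wh}(K_\varepsilon)\le C_1\varepsilon$ with no calculus. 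The only loose ends in your write-up are routine: in the Steiner step you should dispose of the $\pi\delta^2$ term by first noting that either $\delta\ge 1$ (and you are done for small $\varepsilon$) or $\delta<1$ and the term is absorbed into the denominator; and one should observe that $\delta_{wh}(K_\varepsilon)>0$ by the rigidity in \cref{thm:main}, so the ratio $[\alpha_E]^p/\delta_{wh}$ is well defined. With those remarks, your argument is a complete and valid alternative proof.
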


\begin{proof}

To prove the statement, it suffices to exhibit a family of convex bodies $(R_\varepsilon)_\varepsilon$ such that
\begin{equation}
\label{eq:to-be-proven}
\delta_{wh}(R_\varepsilon)\sim\varepsilon \qquad\text{and}\qquad \alpha_E(R_\varepsilon)\sim\varepsilon, \qquad \text{as }\eps\to 0.
\end{equation}

Fix $\eta\in (0, 3^{\sfrac14}\, \pi^{\sfrac12})$ and let $\varepsilon < \min\{\,\sqrt{3}/2, \eta\,\}$. Consider the following sets
\begin{align*}
T_\varepsilon &= \hull\left\{\,\left(-1+\frac{\varepsilon}{2\sqrt{3}},0\right), \left(0,1-\frac{\varepsilon}{2\sqrt{3}}\right), (0,\sqrt{3}-\varepsilon)\,\right\},
\\
R_\varepsilon &= T_0 \cap \left\{\, y \leq \sqrt{3} - \varepsilon \,\right\},
\end{align*}
where $\hull\{\,\cdot\,, \cdots\,,\,\cdot\,\}$ denotes the convex hull. For all $\varepsilon$, the set $T_\varepsilon$ is an equilateral triangle, we have the set inclusions $T_\varepsilon \subset R_\varepsilon \subset T_0$, and the equalities
\begin{equation}
\label{eq:rel_Reps-T0}
\diam(R_\varepsilon) = \diam(T_0) \qquad \text{and} \qquad w(R_\varepsilon)=w(T_\varepsilon)=w(T_0)-\varepsilon.
\end{equation}
For $\varepsilon\ll1$, we also have $h(R_\varepsilon)=h(T_0)$, owing to the characterization in~\cite[Thm.~1]{KLR05}. Thus,
\begin{equation}
\label{eq:sharpness-delta_wh}
\delta_{wh}(R_\varepsilon) = w(T_0)h(T_0) - w(R_\varepsilon)h(R_\varepsilon) = \varepsilon h(T_0),
\end{equation}
which shows that $(R_\varepsilon)_\varepsilon$ satisfies the first request in~\eqref{eq:to-be-proven}.

Regarding the asymmetry, since $w(R_\varepsilon)=w(T_\varepsilon)$, we estimate
\begin{equation}
\label{eq:sharpness-alfa}
\alpha_E(R_\varepsilon)\le \frac{d_H(R_\varepsilon, T_\varepsilon)}{w(R_\varepsilon)} = \frac{\varepsilon}{2w(R_\varepsilon)} = \frac{\varepsilon}{2(w(T_0)-\varepsilon)},
\end{equation}
where the first equality is a straightforward computation and the second one follows from~\eqref{eq:rel_Reps-T0}. We now show this inequality is in fact an equality.

Argue by contradiction and assume that for all sufficiently small $\varepsilon$, there exists an equilateral triangle $T_\delta$ with $w(T_\delta)=w(R_\varepsilon)$ and such that
\begin{equation}
\label{eq:rel_Reps-Tdelta}
\alpha_E(R_\varepsilon) = \frac{d_H(R_\varepsilon, T_\delta)}{w(R_\varepsilon)} = \delta < \frac{\varepsilon}{2(w(T_0)-\varepsilon)}.
\end{equation}
By the definition of the Hausdorff distance, $R_\varepsilon \subset T_\delta \oplus \delta B_1$. Therefore,
\begin{align*}
\diam(R_\varepsilon) 
&
\le 
\diam(T_\delta \oplus \delta B_1) 
=
\diam(T_\delta) + 2\delta
\\
&
= 
\frac{2}{\sqrt{3}}w(T_\delta) + 2\delta
=
\frac{2}{\sqrt{3}}(w(T_0)-\varepsilon) + 2\delta
=
\diam(T_0)- \frac{2}{\sqrt{3}}\varepsilon + 2\delta.
\end{align*}
Rearranging, using the first relation in~\eqref{eq:rel_Reps-T0} and the definition of $\delta$ given in~\eqref{eq:rel_Reps-Tdelta}, using that $w(T_0)=\sqrt{3}$, and that $\varepsilon>0$, yields to the inequality
\[
\frac{2}{\sqrt{3}} < \frac{1}{\sqrt{3}-\varepsilon},
\]
against our initial assumption $\varepsilon < \sqrt{3}/2$. Hence, equality holds in~\eqref{eq:sharpness-alfa}, which, together with~\eqref{eq:sharpness-delta_wh}, gives~\eqref{eq:to-be-proven}.
\end{proof}

\section*{Acknowledgments} 

The present research has been carried out during a visit of the first author in Firenze, hosted by the third author, who at that time was working in Firenze, and sponsored by the ``Programma Internazionalizzazione'' of the Department of Mathematics and Computer Science ``Ulisse Dini'' of Firenze and by INdAM--GNAMPA. The first author has been supported by the Alexander von Humboldt Foundation through a Postdoctoral fellowship. The second and third authors are members of INdAM--GNAMPA and are partially supported by the INdAM--GNAMPA Project, codice CUP \#E5324001950001\#, ``Disuguaglianze funzionali di tipo geometrico e spettrale''. The second author is member the PRIN2022 project 2022SLTHCE ``Geometric-Analytic Methods for PDEs and Applications (GAMPA)''.

\noindent


\bibliographystyle{plainurl} 
\bibliography{bib_HL_conjecture} 

\end{document}